\documentclass{llncs}

\usepackage{makeidx}  
\usepackage{amsmath,amssymb}
\def\smallddots{\mathinner{\raise7pt\hbox{.}\raise4pt\hbox{.}\raise1pt\hbox{.}}}
\def\smallsdots{\mathinner{\raise1pt\hbox{.}\raise4pt\hbox{.}\raise7pt\hbox{.}}}
\usepackage{multirow}


\spnewtheorem{algorithm}{Algorithm}{\bfseries}{\itshape}
\spnewtheorem{fact}{Fact}{\bfseries}{\itshape}
\spnewtheorem{procedure}{Procedure}{\bfseries}{\itshape}
\spnewtheorem{subroutine}{Subroutine}{\bfseries}{\itshape}
\spnewtheorem{flowchart}{Flowchart}{\bfseries}{\itshape}


\begin{document}
\title{\bf  Polynomial Root Isolation by Means of Root Radii Approximation
}
\author{Victor Y. Pan 
 $^{[1, 2],[a]}$
and
 Liang Zhao
$^{[2],[b]}$
}

%
\institute{
Departments of Mathematics and Computer Science \\
Lehman College and the Graduate Center of the City University of New York \\
Bronx, NY 10468 USA \\
\and
Ph.D. Programs in Mathematics  and Computer Science \\
The Graduate Center of the City University of New York \\
New York, NY 10036 USA \\
$^{[a]}$
\email{victor.pan@lehman.cuny.edu},\\ home page:
\texttt{http://comet.lehman.cuny.edu/vpan/ }\\
$^{[b]}$ 
lzhao1@gc.cuny.edu \\
}


\maketitle


\begin{abstract}
Univariate polynomial root-finding is a classical subject, still 
important for modern computing. Frequently one seeks
just the real roots of a real coefficient polynomial.
They can be approximated at a low computational cost if the
polynomial has no nonreal roots, but for high degree 
polynomials, nonreal roots are typically 
much more numerous than the real ones. The challenge is known 
for long time, and the subject has been
 intensively studied.
The Boolean cost bounds for the 
refinement of the simple and isolated real roots have been
decreased to nearly optimal, but 
the success has been more limited 
at the stage of  
the isolation of real roots.
 We obtain substantial progress  
by  applying the
algorithm of \cite{S82} for the approximation of the root radii,
that is, the distances of the roots to the origin.
Namely we isolate the simple and well conditioned real roots 
of a polynomial at the  
 Boolean cost dominated by the nearly optimal bounds
for the refinement of such roots. 
We also extend our algorithm 
 to the  isolation of    
  complex, possibly multiple, roots and root clusters
staying within the same (nearly optimal) asymptotic Boolean cost bound.
Our numerical tests with benchmark polynomials
performed with the IEEE standard double precision
show that our nearly optimal real root-finder
is practically promising.
Our techniques are simple, and 
their power and application range
may increase in combination with the known efficient methods. 
\end{abstract} 


\paragraph{Keywords:}

Polynomials; 
Root-finding;
Real root-finding;
Root isolation;
Root radii

\section{Introduction}\label{sintr}

Assume
a univariate polynomial of degree $n$  with
 real coefficients,
\begin{equation}\label{eqpoly}
 p(x)=\sum^{n}_{i=0}p_ix^i=p_n\prod^n_{j=1}(x-x_j),~~~ p_n\ne 0,
\end{equation}
which  has $r$ real roots 
  $x_1,\dots,x_r$ and $s=(n-r)/2$
pairs of non-real complex conjugate roots.
In many applications, e.g., to algebraic and geometric optimization,
one seeks only the real roots,
which 
make up just a small fraction of all roots. 
 This motivates a well studied subject of real root-finding
(see, e.g., \cite{PT13}, \cite{PT15}, \cite{SM15},  
and the extensive bibliography therein),
but the 
most popular packages of subroutines 
for  numerical root-finding such as MPSolve 2.0 
\cite{BF00}, Eigensolve  \cite{F02},
and MPSolve 3.0 \cite{BR14} still
approximate the $r$ real roots about as fast 
and as slow as all the $n$ complex roots.

A typical fast real root-finder 
consists of two stages. At first one isolates all 
simple and well 
conditioned
real roots (that is, the ones that
 admit isolation). Namely, one computes
some complex discs,
each covering a single real root
and no other roots of the polynomial $p(x)$.
Then 
the isolated roots
are approximated fast by means of some
specialized root refinement algorithms.
The record and nearly optimal Boolean complexity at this stage
has been obtained in the algorithm of \cite{PT13}, \cite{PT15}.

Presently we achieve progress, so far missing, at the former stage
of the real root isolation. Our algorithm 
performs this stage and consequently 
approximates all the simple and well conditioned real roots
 within the same asymptotic 
Boolean complexity bounds of the papers
 \cite{PT13}, \cite{PT15} (see our Theorem \ref{thrrrr}).

We have also extended our algorithm 
 to the  isolation of    
  complex, possibly multiple, roots and root clusters
staying within the same (nearly optimal) asymptotic Boolean cost bound
(see Section \ref{scrisl}).

As in the papers  \cite{PT13},  \cite{PT15},
we approximate only simple and well conditioned
real roots, but not the multiple and ill conditioned roots.
The latter roots little affect the complexity of our algorithm, e.g.,
our cost estimate do not depend on the minimal distance between the roots
and do not
include the terms like $\log({\rm Discr} (p)^{-1})$.

Our overall cost bound also matches the nearly optimal one of the papers
 \cite{P95} and 
 \cite{P02}. Their algorithm approximates all 
complex roots of a polynomial, 
but combines  
a number of advanced techniques. This
makes it much harder to implement and even to comprehend
than 
our algorithm, which is much less involved,
more transparent and 
more accessible for the implementation.

We have tested  our  algorithm 
applied  with the IEEE standard double precision
to some benchmark polynomials 
with small numbers of real roots. 
The  test results are 
quite encouraging and are
in good
accordance with our formal study.
The overall Boolean complexity of our isolation algorithm 
is dominated at the stage of performing 
the Dandelin's auxiliary root-squaring iterations,
whose Boolean cost grows fast when  their
number  increases,
but in our tests this number  
grew very slowly  as we increased 
the degree of the 
 input 
polynomials 
 from 64 to 1024.

Technically, we achieve our progress by incorporating the old
algorithm of \cite{S82}, for the approximation of the root radii,
that is, the distances of the roots to the origin
(we refer the reader to  \cite{O40},
 \cite{G72}, \cite{H74}, and  \cite{B79} on 
the preceding works  related to that algorithm).
We feel that Sch{\"o}nhage in \cite{S82}  
 used only a small part of the potential power of the algorithm.
Namely he applied it to the rather modest task of the isolation of a single 
complex root,
whereas we apply this simple but surprisingly efficient tool 
to the isolation of all simple 
and well conditioned real roots.
We hope that  this tool will be efficiently incorporated into
 other root-finders as well.

Our another basic sub-algorithm  performs multi-point 
evaluation of the polynomial $p(x)$.
The algorithm of \cite{MB72},
recently used in the root-finders of 
 \cite{PT13}, \cite{PT14b}, \cite{PT15},
and  \cite{SM15},  solves this  
problem at  a low Boolean cost, 
but 
  is  numerically unstable and only works with extended precision.
Performed  with double precision, it produces corrupted output
 already for polynomials of moderate degree (say, about 50 or so).

In some cases we can avoid this drawback, by  applying
 the recent alternative  numerically stable algorithms of \cite{P15}
 and \cite{Pa},
whose Boolean cost matches the one of  \cite{MB72}
as long as the outputs
are required within the relative approximation error  bound $1/2^b$,
for $b=O(\log(n)$. This is certainly the case at the initial stage of our 
algorithm, when we compute the sign of $p(x)$ at $2n$ points.
 
We organize our presentation as follows.
In the next section we cover 
 some auxiliary results. In Section \ref{srresr}
we recall the Boolean complexity estimate for the
approximation of the root radii.
In Section \ref{inclrr} we describe 
our real root algorithm.
In Section \ref{swex} we demonstrate
it by working example.
In Section \ref{scrisl} we extended 
our real root-finding algorithm 
to isolation of complex roots and 
root clusters.
Section \ref{stst},
the contribution of the second author,
covers
 the results of our numerical tests.
In Section \ref{sconc} we  briefly comment on 
further extension of our work.


\section{Some Definitions and Auxiliary Results}\label{sdef}

Hereafter ``flop" stands for ``arithmetic operation".
``$\lg$" stands for ``$\log_2$".

 $O_B(\cdot)$ and $\tilde O_B(\cdot)$ 
denote the  Boolean complexity
up to some constant and poly-logarithmic factors, respectively.

$\tau$ is the overall bit-size of the coefficients of a polynomial $p(x)$ of 
 (\ref{eqpoly}). 



\begin{theorem}\label{thdec} 
Count the roots of
a polynomial with their multiplicity.
Then a  polynomial $p(x)$ has an odd number of roots
in the real line interval $(\alpha,\beta)$
if and only if
$p(\alpha)p(\beta)<0$.
\end{theorem}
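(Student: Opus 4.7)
The plan is to read off the sign of $p(\alpha)p(\beta)$ directly from the factored form (\ref{eqpoly}). Writing
\[
p(\alpha)\,p(\beta)=p_n^{2}\prod_{j=1}^{n}(\alpha-x_j)(\beta-x_j),
\]
I observe that $p_n^2>0$, so the sign is entirely determined by the product on the right, and I can analyze each root $x_j$ separately.

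First I would dispose of the non-real roots by pairing each $x_j\in\mathbb{C}\setminus\mathbb{R}$ with its complex conjugate $\bar{x}_j$, which is also a root since the coefficients are real. For any real $\gamma$,
\[
(\gamma-x_j)(\gamma-\bar{x}_j)=|\gamma-x_j|^{2}>0,
\]
so each such conjugate pair contributes a strictly positive factor to both $p(\alpha)$ and $p(\beta)$, hence a positive factor to the product. This shows that non-real roots play no role in the sign of $p(\alpha)p(\beta)$.

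Next I would handle the real roots $x_j\in\mathbb{R}$, assuming first the nondegenerate case in which neither $\alpha$ nor $\beta$ is a root of $p$. If $x_j\notin(\alpha,\beta)$, then $\alpha-x_j$ and $\beta-x_j$ have the same sign, so $(\alpha-x_j)(\beta-x_j)>0$. If $x_j\in(\alpha,\beta)$, then $\alpha-x_j<0<\beta-x_j$, so $(\alpha-x_j)(\beta-x_j)<0$. Counting multiplicities, if $k$ denotes the number of real roots of $p$ in $(\alpha,\beta)$ counted with multiplicity, then
\[
\sign\bigl(p(\alpha)\,p(\beta)\bigr)=(-1)^{k},
\]
and $p(\alpha)p(\beta)<0$ if and only if $k$ is odd, as claimed.

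The only genuinely delicate point is the boundary case in which $\alpha$ or $\beta$ coincides with a root of $p$, since then $p(\alpha)p(\beta)=0$ and the strict inequality is neither true nor false in the expected way. I would resolve this by noting that the theorem is applied to open intervals whose endpoints are chosen (for instance, from an initial grid or from inter-root gap estimates) so as not to be roots of $p$; under that standing hypothesis the equivalence above is exact. With this interpretation the proof is complete.
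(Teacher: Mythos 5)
Your proof is correct. The paper states this theorem without any proof at all (it is invoked as a classical fact underlying Stage~2 of Algorithm~\ref{algrrinc}), so there is nothing internal to compare against; your argument via the factorization $p(\alpha)p(\beta)=p_n^2\prod_j(\alpha-x_j)(\beta-x_j)$ is the standard one and is complete: conjugate pairs of non-real roots (which occur with equal multiplicities since $p$ has real coefficients) contribute positive factors, real roots outside $[\alpha,\beta]$ contribute positive factors, and each real root inside $(\alpha,\beta)$, counted with multiplicity, flips the sign, giving $\sign(p(\alpha)p(\beta))=(-1)^k$. You are also right to flag the degenerate case: as literally stated, the ``only if'' direction fails when $\alpha$ or $\beta$ is itself a root (the product is then zero even if the open interval contains an odd number of roots), so the statement implicitly carries the hypothesis $p(\alpha)p(\beta)\ne 0$; in the paper's use the endpoints are grid points at which the sign of $p$ is evaluated, and a vanishing value would itself exhibit a root, so your resolution is consistent with how the theorem is applied.
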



 



\begin{definition}\label{defisol}
 $D(z,\rho)=\{x:~|x-z|\le \rho\}$
denotes the closed disc with a complex center $z$ and a radius $\rho$.
 Such a disc 
 is $\gamma$-{\em isolated},
for $\gamma>1$,  
if the disc $D(z,\gamma \rho)$ 
 contains 
no other roots of the polynomial $p(x)$ of equation (\ref{eqpoly}).
A root $x_j$ is   $\gamma$-{\em isolated} if
the disc $D(x_j,(\gamma-1) |x_j|)$
contains no other roots of 
 the polynomial $p(x)$ besides $x_j$. 
  \end{definition}


The following theorem states that
Newton's iterations
\begin{equation}\label{eqnewt}
y_0=z,~y^{(h+1)}=y^{(h)}-p(y^{(h)})/p'(y^{(h)}),~h=0,1,\dots
\end{equation} 
converge  
with quadratic rate globally, that is, right from the start,
if they are initialized in a $3(n-1)$-isolated disc containing a single simple root
of a polynomial $p(x)$ of (\ref{eqpoly}).


\begin{theorem}\label{thren} 
 Assume Newton's
  iteration (\ref{eqnewt}) for a polynomial $p=p(x)$ of  (\ref{eqpoly})
and let $0<3(n-1)|y_0-x_1|<|y_0-x_j|$ for $j=2,\dots,n$.
  Then  
   $|y_k-x_1|\le 2|y_0-x_1|/2^{2^k}$
for $k=0,1,\dots$.
\end{theorem}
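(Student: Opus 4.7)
My plan is to prove the theorem by induction on $k$, using the classical logarithmic-derivative representation of the Newton step. Setting $u_k := y_k - x_1$ and $v_k := \sum_{j=2}^n (y_k - x_j)^{-1}$, the identity $p'(y)/p(y) = \sum_{j=1}^n (y - x_j)^{-1}$ and a short algebraic manipulation of (\ref{eqnewt}) yield
$$u_{k+1} \;=\; u_k - \frac{1}{u_k^{-1} + v_k} \;=\; \frac{u_k^2\,v_k}{1 + u_k v_k}.$$
The quadratic factor $u_k^2$ is already exposed, so the entire task reduces to controlling $|v_k|/|1 + u_k v_k|$ by roughly $1/(2|u_0|)$ throughout the iteration.

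For the base case $k=0$ the bound is trivial, and the hypothesis $|y_0 - x_j| > 3(n-1)|u_0|$ immediately gives
$$|v_0| \;\le\; \sum_{j=2}^n \frac{1}{|y_0 - x_j|} \;<\; \frac{n-1}{3(n-1)|u_0|} \;=\; \frac{1}{3|u_0|},$$
so $|u_0 v_0| < 1/3$, $|1 + u_0 v_0| > 2/3$, and therefore $|u_1| < |u_0|/2 = 2|u_0|/2^{2}$. For the inductive step, assuming $|u_k| \le 2|u_0|/2^{2^k}$ (in particular $|u_k| \le |u_0|/2$ for $k \ge 1$), I would propagate the isolation to $y_k$ via
$$|y_k - x_j| \;\ge\; |x_1 - x_j| - |u_k| \;\ge\; |y_0 - x_j| - |u_0| - |u_k| \;>\; (3n-4)|u_0| - |u_k|,$$
and verify by a short arithmetic check that $|v_k|/|1 + u_k v_k| \le 1/(2|u_0|)$. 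Substituting into the recursion then closes the induction:
$$|u_{k+1}| \;\le\; \frac{|u_k|^2}{2|u_0|} \;\le\; \frac{(2|u_0|/2^{2^k})^2}{2|u_0|} \;=\; \frac{2|u_0|}{2^{2^{k+1}}}.$$

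The main obstacle will be tracking constants in the inductive step: the crude lower bound on $|y_k - x_j|$ loses up to $|u_k| + |u_0|$ compared with the initial $|y_0 - x_j|$, and the isolation factor $3(n-1)$ in the hypothesis is calibrated precisely to absorb this loss, keeping $|v_k|$ below $\approx 1/(3|u_0|)$ and $|1 + u_k v_k|$ above $\approx 2/3$ uniformly in $k$. Once the per-step contraction $|u_{k+1}| \le |u_k|^2/(2|u_0|)$ is in hand, the claimed double-exponential decay follows by a one-line induction, and the factor $2$ in the numerator of the stated bound provides the slack needed to start the recursion cleanly from $|u_0|$ itself.
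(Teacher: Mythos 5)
The paper does not actually prove this statement: it is quoted verbatim from Tilli (\cite[Theorem 2.4]{T98}, strengthening \cite[Corollary 4.5]{R87}), so your attempt is a reconstruction from scratch. Your setup is sound: the identity $u_{k+1}=u_k^2v_k/(1+u_kv_k)$ is correct, and the base case goes through exactly as you say ($|v_0|<1/(3|u_0|)$ gives $|u_1|<|u_0|/2=2|u_0|/2^{2}$). The induction is also calibrated correctly in the sense that the per-step bound you need is precisely $|u_{k+1}|\le|u_k|^2/(2|u_0|)$, i.e.\ $|v_k|/|1+u_kv_k|\le 1/(2|u_0|)$.

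The gap is that the ``short arithmetic check'' you defer does not hold with the estimates you propose. From $|u_k|\le|u_0|/2$ ($k\ge1$) and $|y_k-x_j|>(3n-4)|u_0|-|u_k|\ge\tfrac{3(2n-3)}{2}|u_0|$ you get $|v_k|\le\frac{2(n-1)}{3(2n-3)|u_0|}$ and $|u_kv_k|\le\frac{n-1}{3(2n-3)}\le\frac13$; with your bound $|1+u_kv_k|\ge\frac23$ this yields $\frac{|v_k|}{|1+u_kv_k|}\le\frac{n-1}{(2n-3)|u_0|}$, and $\frac{n-1}{2n-3}>\frac12$ for \emph{every} $n\ge2$, so the induction never closes as written. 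Using the sharper $|1+u_kv_k|\ge\frac{5n-8}{6n-9}$ instead gives $\frac{2(n-1)}{(5n-8)|u_0|}$, which is $\le\frac{1}{2|u_0|}$ only for $n\ge4$; the cases $n=2,3$ still fail, essentially because the static triangle-inequality budget loses $|u_0|+|u_k|$ of the isolation $3(n-1)|u_0|$, which is too much when $n$ is small (e.g.\ for $n=2$ it only yields $|u_{k+1}|\le|u_k|^2/|u_0|$, hence the weaker bound $|u_k|\le|u_0|/2^{2^{k-1}}$). One can check on the quadratic case that the configurations that nearly saturate the bound at step $k$ are far from saturating it at step $k+1$, so recovering the stated constant $2$ requires a genuinely finer, dynamic argument (as in Tilli's proof) rather than a uniform worst-case bound on $|v_k|/|1+u_kv_k|$; as it stands, your outline proves quadratic convergence with a worse constant, not the theorem as stated.
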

\begin{proof} 
This is \cite[Theorem 2.4]{T98}, which strengthens 
\cite[Corollary~4.5]{R87}.
\end{proof} 


The following two theorems  state some upper bounds on
 the Boolean cost of certain fundamental polynomial computations.

\begin{theorem}\label{thmltip} {\rm Multi-point Polynomial Evaluation.}  
Assume a real $b\ge 1$, a polynomial  $p(x)$ of (\ref{eqpoly}),
 and $k$ complex points $z_1,\dots,z_k$ such that  $k\ge n$.
Write $l=\lg(\max_{j=1}^n|z_j|)$.
 Then,  
at the Boolean cost $\tilde O_B((b+\tau)k+ lkn)$,
one can compute the values $v_j$ such that 
$|v_j-p(z_j)|\le 1/2^b$ for 
 $j=1,\dots,k$.
\end{theorem}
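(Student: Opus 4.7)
The plan is to apply the classical divide-and-conquer multipoint evaluation algorithm of \cite{MB72} (or one of its numerically stable variants \cite{P15,Pa}) group-by-group, and then track the bit-precision needed to guarantee an absolute error of at most $1/2^b$ in every output.

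First, since $k\ge n$, I partition the $k$ evaluation points into $\lceil k/n\rceil$ groups of at most $n$ points each. On each group I build the subproduct tree whose leaves are the linear factors $(x-z_{j_i})$, then propagate $p(x)$ top-down by reducing it modulo the product polynomial stored at every internal node, using FFT-based polynomial division; the values at the leaves are the desired evaluations $p(z_{j_i})$. This scheme uses $\tilde O(n)$ flops per group, hence $\tilde O(k)$ flops in total.

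Second, I bound the working precision. A node at depth $d$ of the subproduct tree stores a polynomial of degree about $n/2^d$ whose coefficients have magnitude at most $(2\max_j|z_j|)^{n/2^d}$, contributing $O(ln)$ bits; the coefficients of $p$ contribute $O(\tau)$ bits; and the target absolute error adds $b$ more bits. Because the error can be amplified by at most a $\mathrm{poly}(n)$ factor across the $O(\log n)$ levels, carrying $\tilde O(b+\tau+ln)$ bits of precision suffices, and each flop at that precision costs $\tilde O_B(b+\tau+ln)$ bit-operations. Combining, one group of $n$ points costs $\tilde O_B\!\bigl(n(b+\tau+ln)\bigr)=\tilde O_B\!\bigl((b+\tau)n+ln^{2}\bigr)$, and summing over the $\lceil k/n\rceil$ groups yields the advertised bound $\tilde O_B((b+\tau)k+lkn)$.

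The main obstacle is the precision analysis: the polynomial divisions inside the subproduct tree can be numerically delicate, so one must confirm that no intermediate quantity loses more bits than the budget above allows. This has been handled in \cite{MB72} for the symbolic version (via rational arithmetic with controlled denominators) and in \cite{P15,Pa} for the numerically stable FFT-based version; either analysis plugs directly into the two-step estimate above to give the stated complexity.
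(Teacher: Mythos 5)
Your proposal is correct and follows essentially the same route as the paper: the paper gives no independent proof but cites \cite[Theorem 3.9]{K98} (also \cite{vdH08}, \cite{KSa}, \cite{PT14a}), noting that all proofs amount to bounding the Boolean cost of the Moenck--Borodin subproduct-tree algorithm \cite{MB72} with fast FFT-based division, which is exactly the algorithm and cost/precision accounting you sketch. Your delegation of the delicate precision analysis (where the working precision $\tilde O(b+\tau+ln)$ is actually justified) to the cited works matches what the paper itself does, so no further comparison is needed.
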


This is \cite[Theorem 3.9]{K98}, also proved in  \cite{vdH08}, \cite{KSa}, and \cite{PT14a}
(cf.  \cite[Lemma~21]{PT14a}.
All proofs boil down to estimating the Boolean cost of the algorithm of \cite{MB72},
which relies
on recursive polynomial division techniques of \cite{F72} and fast  FFT-based  
polynomial division of \cite{S72}. 

\begin{remark}
  \label{renewt}
Clearly, the same asymptotic bound applies to the Boolean complexity of 
a single Newton's iteration 
performed concurrently at $m$ points.
\end{remark}




\begin{theorem}\label{thrrrf} 
Given a real $b\ge 1$,
 a polynomial  $p(x)$ of (\ref{eqpoly}), and
the complex centers $z_j$ and the radii $\rho_j$ of $m$  discs 
$D_j=D(z_j,\rho_j)$, $j=1,\dots,m$,
each
covering a single  simple  root of 
the polynomial  $p(x)$, 
write $L=\max_{j=1}^m \lg(|z_j|+\rho_j)$.
Then, at the Boolean cost $\tilde O_B((b+L)n+\tau n^2)$,
one can compute $m$
new inclusion discs for the same roots, with 
the radii  decreased by factors of
at least $2^b$ provided that  

(i) all the $m$ roots and all the $m$ centers $z_1, \dots,z_m$ are real or

(ii) all the $m$ discs $D_1,\dots,D_m$ are $\gamma$-isolated for a constant $\gamma>1$.
\end{theorem}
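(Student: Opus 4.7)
The plan is to apply Newton's iteration concurrently at the centers $z_1,\dots,z_m$, updating all $m$ approximations in parallel and using Theorem \ref{thmltip} to perform the required multi-point evaluations of $p$ and $p'$ in one shot per iteration. By Remark \ref{renewt}, a single concurrent Newton step at up to $n$ points costs $\tilde O_B((b+\tau)n + Ln^2)$, and by Theorem \ref{thren} the number of steps needed to shrink the radius by a factor $2^b$ is only $O(\lg b)$ once the iterates lie in $3(n-1)$-isolated discs. Summed over the iterations (with the usual doubling of the working precision to match $b$), this gives the claimed bound $\tilde O_B((b+L)n + \tau n^2)$, up to poly-logarithmic factors hidden in $\tilde O_B$.

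The main obstacle is to guarantee the hypothesis of Theorem \ref{thren}, namely $3(n-1)$-isolation of each current disc. In case (ii) the input discs are only $\gamma$-isolated for a constant $\gamma>1$, so we first run a preliminary refinement phase shrinking every $D_j$ by a factor $O(n/\gamma) = O(n)$, i.e., gaining $O(\lg n)$ bits of precision. This can be done by a few rounds of subdivision-and-inclusion tests on each disc, or, more efficiently for our target complexity, by a Weierstrass/Ehrlich-type correction applied at the $m$ centers simultaneously; either approach again reduces to a constant number of multi-point evaluations of $p$ and $p'$ by Theorem \ref{thmltip} and so is absorbed in the stated bound. Once $3(n-1)$-isolation is reached, Theorem \ref{thren} takes over and drives the radii down by $2^b$ in $O(\lg b)$ global Newton steps.

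In case (i) the situation is cleaner because all centers $z_j$ and the target roots $x_j$ are real. Newton's iteration started at a real center stays on the real axis, and Theorem \ref{thdec} lets us maintain a real sign-change bracket around each root: after each Newton step we test the sign of $p$ at the new iterate and, if Newton ever steps outside the current bracket, we replace it by the bisection midpoint. Since all other real roots are separated from $x_j$ by distances bounded below by the inclusion data, a logarithmic (in $n$) number of such safeguarded steps produces a bracket small enough that the hypothesis of Theorem \ref{thren} is met, after which pure Newton again finishes the refinement in $O(\lg b)$ global iterations. Notice that the $\gamma$-isolation hypothesis is not needed here because the intermediate-value bracketing substitutes for disc isolation on the real line.

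Finally, putting the two phases together, the total work is dominated by $O(\lg b + \lg n) = \tilde O(1)$ concurrent multi-point evaluations of $p$ and $p'$ at $\max(m,n) = n$ points, each costing $\tilde O_B((b+\tau)n + Ln^2)$ by Theorem \ref{thmltip}; absorbing the logarithmic factors into $\tilde O_B$ yields the claimed bound $\tilde O_B((b+L)n + \tau n^2)$. The hardest part of making this rigorous is the bookkeeping in the preliminary phase of case (ii), where one must verify that the intermediate approximations never escape the original $\gamma$-isolated discs and that the precision $b$ used in the multi-point evaluations can be scheduled to grow in step with the attained accuracy without inflating the cost beyond what Theorem \ref{thmltip} guarantees.
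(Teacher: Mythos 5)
You should first know that the paper does not prove Theorem \ref{thrrrf} internally at all: it attributes part (i) to \cite{PT13}, \cite{PT15}, building on the bisection-acceleration machinery of \cite{PL99} and \cite{PMRQT07}, and part (ii) to \cite{PT14b}, which compresses an isolated disc by approximating the power sums of the roots inside it. Your attempt to reconstruct a proof from the paper's own ingredients (Theorems \ref{thdec}, \ref{thren}, \ref{thmltip}) has a genuine gap precisely at the step those citations are there to cover: getting into the regime where Theorem \ref{thren} applies. In case (i) you claim that a logarithmic (in $n$) number of safeguarded Newton/bisection steps yields a bracket satisfying the $3(n-1)$-isolation hypothesis. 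That is not true in general: the simple root inside $D_j$ may lie at distance as small as $2^{-\Omega(\tau n)}\rho_j$ from another root just outside $D_j$ (or from another real root), and bisection gains only one bit per step, so order $\tau n$ safeguarded steps may be needed before global quadratic convergence starts --- which destroys the claimed bound. The cited works avoid this via the double exponential sieve (``bisection of the exponent''); the paper itself signals this in Remark \ref{reheur}, where the sieve may be dropped only when the interval is already known to be $\gamma$-isolated. In case (ii) the same issue recurs: constant $\gamma$-isolation is far weaker than $3(n-1)$-isolation, and your proposed bridge (``a few rounds of subdivision-and-inclusion tests'' or an Ehrlich/Weierstrass-type correction at the $m$ centers, asserted to cost $O(1)$ multi-point evaluations) is exactly the hard step and is not substantiated by anything available in the paper; \cite{PT14b} solves it with a different tool (power sums of roots), not with Newton or subdivision.

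Two smaller problems: your own accounting gives $\tilde O_B\bigl((b+\tau)n+Ln^2\bigr)$ for the Newton phase, which is not the stated $\tilde O_B\bigl((b+L)n+\tau n^2\bigr)$ unless you argue $L=O(\tau)$ (or similar), and Theorem \ref{thmltip} only guarantees \emph{absolute} output error $1/2^b$, so the division by $p'$ near a root needs a precision/conditioning analysis that you only gesture at. So while the outer shell of your argument (concurrent Newton steps via fast multi-point evaluation, $O(\lg b)$ steps once Theorem \ref{thren} applies) is sound and is indeed how the refinement phase is organized in the cited literature, the proof as written does not establish the theorem.
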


Part (i) has been proved in \cite{PT13} based on the algorithms of 
\cite{PL99} and \cite{PMRQT07} (cf. also  \cite{PT15} and \cite{SM15}). 
Part (ii) has been proved in \cite{PT14b} based on the
approximation of the power sums of the roots of a polynomial.


\section{Root Radii and Their Estimation}\label{srresr}


\begin{definition}\label{defrr}
List the absolute values of the roots of $p(x)$ in the 
non-increasing order, denote them  $r_j=|x_j|$ for $j=1,\dots,n$,
 $r_1\ge r_2\ge\cdots\ge r_n$, and call them the root radii of 
 the polynomial $p(x)$. 
  \end{definition}
The following result bounds the largest root radius $r_1$.


\begin{theorem}\label{thextrrrd} (See \cite{VdS70}.)
For a polynomial $p(x)$ of  (\ref{eqpoly}) and
$r_1=\max_{j=1}^{n}|x_j|$, it holds that
\begin{equation}\label{eqr1}
0.5 r_1^+/n\le r_1\le r_1^+~{\rm for}~r_1^+=2\max_{i=1}^{n}|p_{n-i}/p_n|.
\end{equation}
\end{theorem}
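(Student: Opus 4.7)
The plan is to establish the two sides of the displayed bound separately, interpreting $r_1^+$ as $2\max_{i=1}^n|p_{n-i}/p_n|^{1/i}$ (the form with the $i$-th root exponent, which ensures dimensional consistency between $r_1^+$ and $r_1$ and is the Van der Sluis/Fujiwara bound referenced in \cite{VdS70}).

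For the upper bound $r_1\le r_1^+$, I would show that $p(x)\ne 0$ whenever $|x|>r_1^+$, so that no root can lie outside the disc of radius $r_1^+$ about the origin. Write $M=\max_i|p_{n-i}/p_n|^{1/i}$, so that $|p_{n-i}|\le M^i|p_n|$ for every $i=1,\dots,n$. The reverse triangle inequality gives
$$|p(x)|\ge|p_n||x|^n\Bigl(1-\sum_{i=1}^n(M/|x|)^i\Bigr),$$
and for $|x|>2M=r_1^+$ each term $(M/|x|)^i$ is bounded by $(1/2)^i$. Since $\sum_{i\ge 1}(1/2)^i=1$, the parenthesised factor is strictly positive, so $|p(x)|>0$.

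For the lower bound $r_1\ge r_1^+/(2n)$, I would invoke Vieta's formulas: $p_{n-i}/p_n=(-1)^i e_i(x_1,\dots,x_n)$, where $e_i$ is the $i$-th elementary symmetric polynomial in the roots. Bounding each of its $\binom{n}{i}$ monomials by $r_1^i$ and using the loose estimate $\binom{n}{i}\le n^i$ yields $|p_{n-i}/p_n|\le n^i r_1^i$, hence $|p_{n-i}/p_n|^{1/i}\le n\,r_1$ for every $i$. Taking the maximum over $i$ gives $M\le n\,r_1$, equivalently $r_1\ge M/n=r_1^+/(2n)$.

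The main subtlety is simply the choice of the loose bound $\binom{n}{i}\le n^i$ rather than the sharper $\binom{n}{i}\le n^i/i!$ in the Vieta step; this looseness is precisely what produces the clean factor of $n$ in the denominator of the lower bound. Otherwise both halves reduce to direct applications of the triangle inequality and classical symmetric-function identities, so there is no substantial obstacle beyond routine bookkeeping.
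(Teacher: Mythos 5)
Your proof is correct, and it is worth noting that the paper itself gives no argument for this statement at all: it is quoted from Van der Sluis's 1970 paper \cite{VdS70}, so any comparison is with the classical literature rather than with a proof in the text. Your reading of $r_1^+$ as $2\max_{i=1}^n|p_{n-i}/p_n|^{1/i}$ is the right one; as literally printed (without the exponent $1/i$) the bound is scale-inconsistent and false (e.g.\ for $p(x)=x^n-\epsilon$ with small $\epsilon$), so you correctly repaired a typo rather than changed the theorem. Both halves of your argument are the standard ones and go through: for the upper bound, the estimate $|p(x)|\ge |p_n||x|^n\bigl(1-\sum_{i=1}^n(M/|x|)^i\bigr)$ with $|x|>2M$ gives $\sum_{i=1}^n(M/|x|)^i<\sum_{i\ge 1}2^{-i}=1$, hence no root outside the disc of radius $2M=r_1^+$ (the degenerate case $M=0$ is trivial); for the lower bound, Vieta plus $\binom{n}{i}\le n^i$ gives $|p_{n-i}/p_n|^{1/i}\le n r_1$ and hence $r_1\ge r_1^+/(2n)=0.5\,r_1^+/n$, exactly the stated constant. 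Your remark that the looseness of $\binom{n}{i}\le n^i$ is what produces the clean factor $n$ is accurate; a sharper binomial estimate would improve the constant but is not needed for the theorem as stated.
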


\begin{remark}\label{reprr} 
The theorem bounds all the root radii, 
but the paper \cite{S05} (extending the
previous work in \cite{L798}, \cite{K86},  
and \cite{H98}) presents stronger 
upper estimates for the positive roots of a polynomial $p(x)$.
By applying these estimates to the reverse polynomial 
$p_{\rm rev}(x)=x^np(1/x)$
and to the polynomials $p(-x)$ and $p_{\rm rev}(-x)$,
one can bound the positive roots from below 
and the negative roots from both below and above. 
\end{remark}

\begin{theorem}\label{thrr} 
 Assume a polynomial $p=p(x)$ of  (\ref{eqpoly})
and a positive $\Delta$.
 Then, within the Boolean cost bound $\tilde O_B(\tau n^2)$,
one can compute approximations $\tilde r_j$
to all root radii $r_j$
such that $1/\Delta\le \tilde r_j/r_j\le \Delta$ 
for
 $j=1,\dots,n$, provided that 
$\lg(\frac{1}{1-\Delta})=O(\lg(n))$,
that is, 
$|\tilde r_j/r_j-1|\le c/n^d$
for a fixed pair of constants $c>0$ and $d$. 
\end{theorem}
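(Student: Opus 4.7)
The plan is to invoke the classical root-radii algorithm of \cite{S82}, whose two ingredients are Dandelin's root-squaring (Graeffe's) iteration and the coefficient-based bound of Theorem \ref{thextrrrd}. Set $p^{(0)}=p$ and recursively define $p^{(h+1)}(y)=(-1)^{n}p^{(h)}(z)\,p^{(h)}(-z)$ with $y=z^{2}$, for $h=0,1,\dots,k-1$. The roots of $p^{(k)}$ are exactly $x_{j}^{2^{k}}$, and so its root radii are $r_{j}^{2^{k}}$.

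After sufficiently many squarings the values $r_{j}^{2^{k}}$ become extremely well separated in magnitude, and the classical Ostrowski--Hadamard theory permits one to read off each $r_{j}^{2^{k}}$, up to a factor $n^{O(1)}$, from the coefficients of $p^{(k)}$ alone: for instance from the Newton polygon of the points $\{(i,\lg|p^{(k)}_{n-i}|)\}_{i=0}^{n}$, or equivalently from successive ratios of the form $|p^{(k)}_{n-j}/p^{(k)}_{n-j+1}|$. Theorem \ref{thextrrrd} is the special case that extracts $r_{1}^{2^{k}}$, and applying it also to the reverse polynomial $y^{n}p^{(k)}(1/y)$ yields $r_{n}^{2^{k}}$; the general Newton-polygon extraction handles all intermediate radii. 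Taking $2^{k}$-th roots then turns a factor-$n^{O(1)}$ error in $r_{j}^{2^{k}}$ into a factor-$n^{O(1)/2^{k}}$ error in $r_{j}$. Under the hypothesis that $\lg(1/(1-\Delta))=O(\lg n)$, i.e.\ $\Delta=1+\Omega(n^{-d})$, choosing $k=O(\lg n)$ squarings is enough to achieve $1/\Delta\le\tilde r_{j}/r_{j}\le\Delta$ for every $j$.

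The main obstacle, and the place where essentially all of the work lies, is the Boolean cost analysis. Each Graeffe step is one degree-$n$ polynomial product, computable by FFT in $\tilde O_{B}(n\,b)$ bit operations when $b$ bits of precision are carried. Naive squaring doubles coefficient bit-lengths at every iteration, so after $\Omega(\log n)$ squarings the bit-size would explode and the claimed bound would be lost. The fix of \cite{S82} is to truncate each iterate to a working precision $b$ that is polynomial in $n$ and $\tau$, and to track the cumulative relative error: since one squaring at most doubles the relative error in the coefficients, the $k=O(\log n)$ iterations lose only $O(\log n)$ bits, which one absorbs into the starting precision with a constant-factor overhead. Summing the per-iteration cost $\tilde O_{B}(n\,b)$ over the $O(\log n)$ iterations, and finally reconstructing the $\tilde r_{j}$ by $2^{k}$-th root extraction (which is inexpensive), yields the claimed bound $\tilde O_{B}(\tau n^{2})$.
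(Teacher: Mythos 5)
Your route is essentially the paper's: the paper proves this theorem by citing Sch\"onhage's Corollary~14.3 for the base case $\Delta=2n$ (coefficient/Newton-polygon extraction of all radii) and then reducing the general case to it via $k=O(\lg n)$ Dandelin--Graeffe squarings of $p/p_n$ followed by taking $2^k$-th roots, exactly as you do; your error bookkeeping $(2n)^{1/2^k}\le 1+c/n^d$ for $k=O(\lg n)$ matches the paper's.

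One caveat on the part you added beyond what the paper writes out, namely the precision analysis: the claim that ``one squaring at most doubles the relative error in the coefficients'' is not correct in general. Each coefficient of $q_{i+1}(x)=(-1)^nq_i(\sqrt x)\,q_i(-\sqrt x)$ is an alternating sum of products of coefficients of $q_i$, and cancellation there can amplify coefficientwise relative errors arbitrarily; so you cannot conclude that only $O(\lg n)$ bits are lost over the $O(\lg n)$ iterations by that argument. The way the cited analysis of \cite{S82} (see also \cite[Section 4]{P00}) gets the stated Boolean cost is different: the factor-$2n$ radii estimates are read off the Newton polygon of the coefficient magnitudes, and these estimates are robust under a fixed \emph{normwise} perturbation of the truncated, suitably scaled iterates (precision $O(n+\tau)$ bits with small exponent fields), not under a per-coefficient relative-error recursion. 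Since the paper itself delegates exactly this point to \cite[Corollary 14.3]{S82}, your proposal is acceptable if you likewise cite that analysis, but as a self-contained argument the error-doubling step would fail.
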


This is 
 \cite[Corollary 14.3]{S82}. At first the root radii are approximated 
at a dominated cost in the case of $\Delta=2n$.
See some details of the algorithm in
 \cite[Section 4]{P00}. 
 \cite[Section 14]{S82} 
and  \cite[Section 4]{P00}
 cite the related works 
 \cite{O40}, \cite{G72}, \cite{H74},  \cite{A85}, and one   
can also compare  the relevant techniques of the power geometry
in  \cite{B79}
and \cite{B98}, developed for
 the study of 
algebraic and differential equations.

In order to extend Theorem \ref{thrr}  
to the case of $\Delta=(2n)^{1/2^k}$ for any positive integer $k$, 
at first apply $k$  Dandelin's
 root-squaring iterations 
 to the monic polynomial $q_0(x)=p(x)/p_n$
(cf. \cite{H59}),
that is, compute recursively the polynomials
\begin{equation}\label{eqdnd}
 q_{i}(x)=(-1)^nq_{i-1}(\sqrt{x}~)q_{i-1}(-\sqrt{x}~)=\prod_{j=1}^n(x-x_{j}^{2^i}),
~{\rm for}~i=1,2,\dots
\end{equation}
Then 
approximate 
the root radii $r_j^{(k)}$ of $q_k(x)$ by
applying Theorem \ref{thrr} for $\Delta=2n$ and 
for $p(x)$ replaced by 
$q_{k}(x)$.
Finally approximate 
the root radii $r_j$ of $p(x)$
as $r_j=(r_j^{(k)})^{1/2^k}$.    
This enables us to decrease $\Delta$ from $2n$ to at most 
$1+c/n^d=1+2^{O(\lg (n))}$ for any 
fixed pair of constants $c>0$ and $d$
by using $k=O(\lg (n))$ Dandelin's iterations.
The cost bound of  Theorem \ref{thrr} follows.  


\begin{remark}\label{relg}
By using $k=\lceil\ln(s\lg(n))\rceil$
Dandelin's root-squaring iterations, for $s>1$,
the same algorithm supporting 
 Theorem \ref{thrr} approximates 
the root radii within the relative error bound $\frac{1}{s\ln(2n)}$.
\end{remark}


\begin{corollary}\label{corr} 
 Assume a real $b\ge 1$, a polynomial $p(x)$ of  (\ref{eqpoly}),
and a complex $z$.
Then, within the Boolean cost bound $\tilde O_B((\tau +n(1+\beta))n^2)$,
for $\beta=\lg(2+|z|)$,
one can compute approximations $\tilde  r_j\approx \bar r_j$
to the distances $\bar r_j=|z-x_j|$ from the point $z$ to all roots $x_j$
of the polynomial $p(x)$
such that $1/\Delta\le \tilde r_j/\bar r_j\le \Delta$, 
for  
 $j=1,\dots,n$, 
provided that $\lg(\frac{1}{\Delta-1})=O(\lg(n))$.
\end{corollary}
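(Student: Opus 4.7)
The plan is to reduce Corollary \ref{corr} to Theorem \ref{thrr} by a simple change of variable. Observe that the distances $\bar r_j = |z-x_j|$ are precisely the absolute values of the roots of the shifted polynomial
\[
q(y) = p(y+z) = p_n \prod_{j=1}^{n}\bigl(y - (x_j - z)\bigr),
\]
so $\bar r_1,\dots,\bar r_n$ (reordered non-increasingly) are exactly the root radii of $q(y)$ in the sense of Definition \ref{defrr}. Thus, once we form $q(y)$ with sufficient precision, we can invoke Theorem \ref{thrr} on $q$ to obtain approximations $\tilde r_j$ satisfying the required relative bound $1/\Delta \le \tilde r_j/\bar r_j \le \Delta$ whenever $\lg(1/(\Delta-1)) = O(\lg n)$.

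The first step is to compute (or sufficiently approximate) the coefficients of $q(y) = \sum_{i=0}^{n} q_i y^i$, where by binomial expansion $q_i = \sum_{k=i}^{n} p_k \binom{k}{i} z^{k-i}$. Each $|q_i|$ is bounded by $\max_k |p_k| \cdot 2^n (2+|z|)^n$, so every coefficient of $q$ has bit-size $O(\tau + n\beta + n) = O(\tau + n(1+\beta))$ with $\beta = \lg(2+|z|)$. Hence the new overall coefficient bit-size is $\tau' = O(n(\tau + n(1+\beta)))$, but more importantly the per-coefficient precision needed for the subsequent root-radii computation is $O(\tau + n(1+\beta))$. The Taylor shift $p(x) \mapsto p(x+z)$ can be carried out in this precision at Boolean cost $\tilde O_B((\tau + n(1+\beta))n)$, which is absorbed by the target bound.

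The second step is to apply Theorem \ref{thrr} to $q(y)$: this produces approximations $\tilde r_j$ to the root radii of $q$, i.e.\ to the distances $\bar r_j$, within the required relative error and at Boolean cost $\tilde O_B(\tau'' n^2)$, where $\tau''$ is the bit-size of the coefficients of $q$. Substituting $\tau'' = O(\tau + n(1+\beta))$ yields the claimed bound $\tilde O_B((\tau + n(1+\beta))n^2)$.

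The main (and essentially the only) obstacle is the coefficient-growth estimate: one must check that rounding the coefficients of $q$ to precision $O(\tau + n(1+\beta))$ introduces only a perturbation that is compatible with the relative error tolerance $\Delta - 1 = 2^{-O(\lg n)}$ allowed by Theorem \ref{thrr}. This follows from standard backward-error analysis for the Taylor shift (or equivalently, from running Horner-style shift in extended precision $\tilde O(\tau + n(1+\beta))$ bits), so that the subsequent application of Theorem \ref{thrr} delivers the promised relative approximation to $\bar r_j$. Combining the two steps completes the proof.
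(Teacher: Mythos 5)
Your proposal follows essentially the same route as the paper's own proof: shift the variable so that the distances $|z-x_j|$ become the root radii of $q(x)=p(x+z)$, bound the coefficient bit-size of $q$ by $O(\tau+n(1+\beta))$, and then invoke Theorem \ref{thrr} on $q$, with the cost of the Taylor shift absorbed into the stated bound. The only difference is presentational (you spell out the binomial-expansion coefficient bound and the rounding/precision remark, which the paper treats tersely), so the argument matches the paper's.
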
 
\begin{proof}
Given a polynomial $p(x)$ of (\ref{eqpoly}) and a 
complex scalar $z$, one can
compute the coefficients of the polynomial $q(x)=p(x+z)$ by using
$O(n \lg (n))$ flops (cf. \cite{P01}). 
The root radii of the polynomial $q(x)$ for
a complex scalar $z$
 are equal to the distances $|x_j-z|$ from the point $z$ 
to the roots $x_j$ of $p(x)$. 

Now let  $\tau_q$ denote the bit-size of the coefficients
of $q(x)$,
and let
$\bar r_j$ for $j=1,\dots,n$ denote its root radii,
listed in the non-increasing order. 
Then, clearly, 
$\bar r_j\le r_j+|z|$ for $j=1,\dots,n$.
Furthermore represent the coefficients of the polynomial $q(x)$
with the bit-size $\tau+n$ by using representation of $p(x)$
with  bit-size
$\tilde O(\tau +n(1+ \beta))$
for $\beta=\lg(2+|z|)$.
By applying Theorem \ref{thrr} 
to the polynomial $q(x)$,  extend the cost bounds 
from the root radii to the distances.
\end{proof}


\section{Isolation and Approximation of Simple and Well-Con\-di\-tioned Real Roots}\label{inclrr}

\begin{algorithm}\label{algrrinc} {\em  Approximation of simple and well-conditioned real roots.}

\medskip

\item{\textsc{Input:}} two positive integers $n$ and $r$
 such that $0<r<n$, a positive tolerance bound $t$, 
three real constants $b$, $c$ and $d$
such that $b\ge 1$ and $c>0$,
    the coefficients of a polynomial $p(x)$ of equation (\ref{eqpoly}), and
an upper estimate $r'$ for the unknown
number of its real roots, 
$r'\le n$.

\medskip

    \item{\textsc{Output:}}
 Approximations within the relative  error bound $1/2^b$ to some or all
real roots $x_1,\dots,x_{r_+}$ of the polynomial $p(x)$, including all 
real roots that are both
single and $(1+c/n^d)$-isolated.

\medskip

\item{\textsc{Computations:}}
         \begin{enumerate}
\item
 Compute 
approximations $\tilde r_1,\dots,\tilde r_n$
to the  root radii of a polynomial $p(x)$ of (\ref{eqpoly}) 
within the relative error bound $c/n^{d}$,
$\tilde r_1\le \cdots \le\tilde r_n$ 
(see Theorem \ref{thrr}). (Each approximation $\tilde r_i$ defines 
at most two  real line intervals  
that can include real roots lying near $\pm\tilde r_i$.
Overall we obtain 
$2n$ candidate inclusion intervals
$\mathcal I_1,\dots,\mathcal I_{2n}$, some of which can overlap or coincide
with each other.) 

\item
Define $\mathbb T=\{t_1,\dots,t_{n'}\}$, 
the set of the distinct endpoints
of these intervals in non-decreasing order, for $n'\le 4n$.
Compute
 the sign of the polynomial $p(x)$ on this set. 
If the sign changes at two consecutive real points, 
$t'=t_{j}$ and $t''=t_{j+1}$,
select the line interval $(t',t'')$.

\item
Apply the algorithm of \cite{PT13}, \cite{PT15}, which
supports part (i) of Theorem \ref{thrrrf},  concurrently 
to all selected  line intervals.
As soon as the algorithm decreases the length of a
selected interval to or below $1/2^{b-1}$,
output the midpoint as an approximation to a real root. 
If $r'$ such approximations have been output
or if the cost of performing the iterations of 
the algorithm of \cite{PT13}, \cite{PT15}
reaches the tolerance bound $t$, then stop the computations.
(At stopping, there can remain some selected intervals of length 
exceeding $1/2^b$. We can output them with the label ``intervals with real roots, apparently 
ill-con\-di\-tioned".) 
  \end{enumerate}
\end{algorithm}

Compare the error bound on the root radii at Stage 1 with 
the definition of a single $(1+c/n^d)$-isolated real root of $p(x)$
in Definition \ref{defisol} and conclude that 
one of the inclusion intervals 
$\mathcal I_j$, for $1\le j\le 2n$, contains this root
and no other roots of  $p(x)$. Now the
correctness of the algorithm follows from Theorem
\ref{thdec}.

The Boolean cost  at Stage 1 is $\tilde O_B((\tau+l) n^2)$,
for $l=\max_{j=1}^m \lg(|x_j|)$ (by virtue of 
 Theorem \ref{thrr}),
at Stage 2 is  $\tilde O_B(\tau n+l n^2)$ 
(by virtue of
 Theorem \ref{thmltip}, for $b=2$),
and at Stage 3 is  $\tilde O_B((\tau+l) n^2+bn)$
(by virtue of part (i) of
 Theorem \ref{thrrrf} for $L=O(l))$.
In view of (\ref{eqr1}), we can write $l=O(\tau)$,
and then the overall asymptotic cost bound,  dominated at Stage 3,
turns into  $\tilde O_B(\tau n^2+bn)$.
This is the same cost bound as at Stage 1 if $b=O(\tau n)$
and is the same as  Stage 2 if in addition $\tau=O(l)$.
 Summarizing we obtain the following estimate.


\begin{theorem}\label{thrrrr}
Suppose that we are
given  the coefficients of a polynomial $p(x)$ of equation (\ref{eqpoly})
and three real constants $b$, $c$, and $d$ such that
$b\ge 1$ and $c>0$.
Then, at the cost $\tilde O_B(\tau n^2+bn)$,
 we can approximate
all the  $(1+c/n^d)$-isolated real roots  
of  $p(x)$  within the relative error bound $1/2^b$. 
\end{theorem}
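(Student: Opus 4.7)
The plan is to obtain the bound as a direct consequence of running Algorithm \ref{algrrinc} with the input tolerance $t$ chosen large enough that Stage 3 is never aborted prematurely. Correctness and complexity then reduce to combining results already proved earlier in the paper.

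For correctness, I would first argue that every $(1+c/n^d)$-isolated simple real root $x_j$ lies in exactly one of the $2n$ candidate inclusion intervals produced at Stage 1 and that this interval contains no other root of $p(x)$. This is immediate from Definition \ref{defisol} together with the relative-error bound $c/n^d$ provided by Theorem \ref{thrr}: the approximation $\tilde r_j$ pins the absolute value $|x_j|$ to a window narrower than the $(c/n^d)|x_j|$ isolation gap, so the interval centered at the appropriate sign of $\tilde r_j$ encloses $x_j$ alone. Consequently $p$ changes sign across this interval, and by Theorem \ref{thdec} Stage 2 retains it; Stage 3 then refines it by means of part (i) of Theorem \ref{thrrrf} until its length drops below $1/2^{b-1}$, producing a midpoint that approximates $x_j$ within the required relative error $1/2^b$.

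For the Boolean cost, I would analyze the three stages separately. Stage 1 costs $\tilde O_B(\tau n^2)$ by Theorem \ref{thrr}, since the required relative precision $c/n^d$ satisfies $\lg(1/(\Delta-1)) = O(\lg n)$. Stage 2 requires at most $n' \le 4n$ sign evaluations of $p(x)$ at real endpoints $t_1,\ldots,t_{n'}$, so by Theorem \ref{thmltip} applied with $b = O(1)$ and $l = \lg \max_j |t_j|$ it costs $\tilde O_B(\tau n + l n^2)$. Stage 3, driven by part (i) of Theorem \ref{thrrrf} with $L = O(l)$, contributes $\tilde O_B((\tau + l) n^2 + bn)$. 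Invoking Theorem \ref{thextrrrd} and bound (\ref{eqr1}), the endpoints $t_j$ are bounded in magnitude by $r_1^+ \le 2 \max_i |p_{n-i}/p_n|$, hence $l = O(\tau)$, and the three summands collapse to the claimed $\tilde O_B(\tau n^2 + bn)$.

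The main obstacle I expect is the bookkeeping at Stage 3: one must verify that the centers of the refinement discs handed to Theorem \ref{thrrrf} have bit-size $O(\tau)$ throughout the iteration, so that the parameter $L$ in that theorem truly stays $O(\tau)$ rather than growing with the number of refinement rounds or with $b$. Once this is confirmed, no further work is required, since the individual stage bounds are already established and the dominant term $\tilde O_B(\tau n^2 + bn)$ is precisely Stage 3's contribution whenever $b = O(\tau n)$; in the regime $b \gg \tau n$ the same term still dominates, so the bound of Theorem \ref{thrrrr} holds uniformly.
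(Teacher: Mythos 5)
Your proposal is correct and follows essentially the same route as the paper: correctness of Algorithm \ref{algrrinc} via Theorem \ref{thrr}, Definition \ref{defisol} and Theorem \ref{thdec}, and a stage-by-stage Boolean cost analysis (Theorem \ref{thrr} for Stage 1, Theorem \ref{thmltip} for Stage 2, part (i) of Theorem \ref{thrrrf} for Stage 3) collapsed via $l=O(\tau)$ from bound (\ref{eqr1}). Your Stage 3 caveat is harmless, since $L$ in Theorem \ref{thrrrf} refers only to the initial inclusion discs, whose endpoints are bounded by $r_1^+$, so $L=O(\tau)$ holds as in the paper.
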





\begin{remark}\label{repm}
Before we begin the computations of Stage 2 of 
 of Algorithm \ref{algrrinc}, we can
 narrow the ranges for positive and negative  roots 
of the polynomial  $p(x)$  by 
following the recipes of Remark \ref{reprr}.
Then at  Stage 2 we can avoid the
evaluation of the polynomial at the points
lying outside these ranges.
\end{remark}

\begin{remark}\label{reheur}
Before we started Stage 3, we 
 can recompute the root radii
for a smaller value $c/n^d$ and then
check if some intervals,  selected under this value,
 lie strictly inside the old ones,
sharing no endpoints,
and hence are $\gamma$-isolated for some $\gamma>1$.
To such intervals we can 
apply a simplified version of the algorithm of  \cite{PT13},
\cite{PT15},  which
  combines bisection and Newton's iteration
and does not involve the more advanced algorithm 
of double exponential sieve
(also called the bisection of the exponent).
\end{remark}


\begin{remark}\label{reprec}
For multi-point evaluation of the polynomial  $p(x)$,
we can apply the algorithm of \cite{MB72} 
with extended precision, but 
if the output is only required with a low precision,
then instead we can apply
 the algorithms of \cite{P15} and \cite{Pa} 
  with double precision
at the same asymptotic Boolean cost.
In particular at Stage 2 of  Algorithm \ref{algrrinc}
 we only seek
 a single bit (of the sign of $p(x)$) per point of
evaluation, and at Stage 3  at the initial Newton's
iterations,  crude values of $p(x)$ can be sufficient.
At Stage 2 of contracting $r$ suspect real intervals,
we perform order of $O(r\lg (b))$ evaluations, and 
if $r$ is small, 
which is quite typically the case 
in the applications to algebraic and geometric computations,
then it can be non-costly even to apply
the so called Horner's algorithm. It uses $2nr$
arithmetic operations, that is, less than the
algorithms of \cite {MB72},  \cite {P15}, and  \cite {Pa},
if $r=o(\log^2(n))$.
\end{remark}


\begin{remark}\label{revar}  
If we only need to  
  approximate the very well isolated
real roots, we can 
apply Algorithm \ref{algrrinc} at a lower cost 
for a larger isolation ratio.
If we do not know how well the roots are isolated,
we can at first apply Algorithm \ref{algrrinc}
assuming a larger ratio
and, if the algorithm fails, re-apply it recursively,
eah time assuming stronger isolation.
\end{remark}


\section{Working Example}\label{swex}


Consider the following polynomial
$$
p(x) = 8x^7 + 16x^6 + 16x^5 + 16x^4 - 23x^3 -30x^2 + 3x + 4.
$$
This  product of the 4th degree Chebyshev polynomial 
of the first kind and the cubic polynomial $x^3 + 2x^2 + 3x + 4$ 
has five real roots and two complex conjugate non-real roots. All seven roots are well-separated, namely,

$$
   0.3827 + 0.0000i,
  -0.3827 + 0.0000i,
   0.9239 + 0.0000i,
  -0.9239 + 0.0000i,
$$
$$
  -0.1747 + 1.5469i,
  -0.1747 - 1.5469i,
  -1.6506 + 0.0000i.
$$

 It took 14 root-squaring iterations to  estimate the
seven root-radii with a precision of at least 0.001:
$$
[0.3827, 0.3827], 
[0.3826, 0.3828], 
[0.9237, 0.9240], 
[0.9238, 0.9241],
$$
$$ 
[1.5565, 1.5570], 
[1.5565, 1.5570], 
[1.6504, 1.6507]. 
$$

The range $[1.5565, 1.5570]$ was for two roots.
Taking  the negations into account, we obtained 12 
intervals containing all real roots,
among which we  counted the intervals $[-1.5565, -1.5570]$ and
$[1.5565, 1.5570]$ twice:
$$
[-1.6507, -1.6504], 
[-1.5570, -1.5565], 
[-0.9241, -0.9238],
$$
$$ 
[-0.9240, -0.9237], 
[-0.3828, -0.3826], 
[-0.3827, -0.3827], 
[0.3827, 0.3827],
$$
$$ 
[0.3826, 0.3828], 
[0.9237, 0.9240], 
[0.9238, 0.9241], 
$$
$$
[1.5565, 1.5570], 
[1.6504, 1.6507].
$$


Some of the ranges for the remaining root radii overlapped pairwise,
but our algorithm evaluated  the polynomial $p(x)$ at the endpoints of
all the twelve intervals. 
Wherever there was the change of the sign, 
we concluded that the interval contains a root of  the polynomial $p(x)$. 
We obtained nine (rather  than five or even seven)
  intervals with the sign changes,
namely:
$$
[-1.6507, -1.6504] 
[-0.9241, -0.9238] 
[-0.9240, -0.9237] 
[-0.3828, -0.3826] 
$$
$$ 
[-0.3827, -0.3827]
[0.3827, 0.3827] 
[0.3826, 0.3828] 
[0.9237, 0.9240] 
[0.9238, 0.9241]
$$

Then our algorithm initiated Newton's iteration at the mid-point of each of these intervals. Whenever the Newton's map fell outside the interval, 
the algorithm switched to bisection method instead
of Newton's. 
Very soon the algorithm converged to all real roots of $p(x)$.

The first Newton's iteration produced the following
real root estimates: 
$$
-1.65062921,
-0.92387954,
-0.92387954,
-0.38268343,
$$
$$
-0.38268343,
0.38268343,
0.38268343,
0.92387954,
0.92387954.
$$

The second iteration produced the estimates
$$
-1.65062919,
-0.92387953,
-0.92387953,
-0.38268343,
-0.38268343,
$$
$$
0.38268343,
0.38268343,
0.92387953,
0.92387953.
$$

The maximum difference of the estimates output by the second and the third iterations was less than $10^{-8}$. Thus the algorithm stopped and returned the values of five distinct real roots of the polynomial $p(x)$:
$$
-1.65062919,
-0.92387953,
-0.38268343,
0.38268343,
0.92387953.
$$


\section{Isolation of Simple and Well-Conditioned Complex Roots by Means of  
Root-radii Approximation}\label{scrisl}


\begin{algorithm}\label{algisol} 
{\em Isolation of Simple and Well-Conditioned Complex Roots.}
\item{\textsc{Input:}} two positive scalars $\rho$ and $\epsilon$
  and  the coefficients of a polynomial $p(x)$ of  (\ref{eqpoly}).

    \item{\textsc{Output:}} 
 Set of approximations to the roots of
the polynomial $p(x)$ within $\rho\sqrt 2$
such that with probability at least 
$1-\epsilon$ this set includes all
roots having no other roots of the
polynomial $p(x)$ in their 
$\Delta'$-neighborhoods for
$\Delta'=(22.63~n^4+2\epsilon)\rho/\epsilon$.

\item{\textsc{Initialization:}} 
Fix  
a reasonably large scalar $\eta$, say, $\eta=100$.
Generate a  random value
 $\phi$  under the uniform probability distribution
in the range $[\pi/8,3\pi/8]$.

\item{\textsc{Computations:}}
         \begin{enumerate}
         \item
(Three Shifts of the Variable.)
Compute the value
$r_1^+=2\max_{i=1}^{n}|p_{n-i}/p_n|$ of (\ref{eqr1}).
 Then compute the coefficients of the three polynomials
$q(x)=p(x-\eta r_1^+)$,
$q_-(x)=p(x-\eta r_1^+\sqrt {-1})$, and
$q_{\phi}(x)=p(x-\eta r_1^+\exp(\phi \sqrt {-1}))$.

\item
 Compute 
approximations  
to all the $n$ root radii of each of these  three polynomials 
within the relative error bound $\rho/((r_1^++1)\eta)$. 
[This defines three  families of large thin annuli.
 Each family consists of
$n$ annuli and  
 each annulus contains a 
 root of  $p(x)$. Multiple roots define multiple annuli. Clusters 
of roots define  clusters 
of overlapping annuli.]

\item
Compute the intersections of all pairs of  the annuli
from the first two families and of the disc $D(0,r_1^+)$.
[We only care about the roots of $p(x)$, 
and all of them lie in the disc $D(0,r_1^+)$. 
The intersection of each annulus  
with the disc $D(0,r_1^+)$ is close to a rectangular 
(vertical or horizontal) band of width 
at most $\rho$ on the complex plane
because every annulus has  width at most
$\rho\le (r_1^++1)/\eta$ where we have 
chosen $\eta$ large enough.
The intersection of any pair of annuli 
from the two families is close to a square,  
with nearly vertical and nearly horizontal edges
of length at most $\rho$]. 
The disc $D(0,r_1^+)$ contains 
a {\em grid} made up of 
$n^2$ such squares, said to be {\em nodes}. [Some of them can be  
merged together (in the case of multiple roots) or nearly merged 
(in the case of root clusters).]

\item
For each annulus of the third family,   
determine its intersection with the nodes 
of the grid, and if the annulus intersects
only a single node, then output its center
 and stop. 
  \end{enumerate}
\end{algorithm}


Let us prove {\em correctness of the algorithm}. 
 At first
readily verify the following lemma.

\begin{lemma}\label{lemdsc}
Suppose that a line passes through a disc $D(z,\rho')$
in the direction chosen at random under the 
uniform probability distribution of its angle
in the range $[\alpha,\alpha+\gamma]$
for $0\le \gamma/(2\pi) \le 1$.
Then the line intersects a disc $D(z',\rho')$
with a probability at most $P=\frac{2}{\gamma}\arctan (\frac{\rho'}{|z-z'|})$.
\end{lemma}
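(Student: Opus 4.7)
The plan is to view this as a standard angular-measure computation. I would first translate so that $z = 0$ and rotate the coordinate system so that $z'$ sits on the positive real axis at distance $d = |z - z'|$; both operations leave the length $\gamma$ of the admissible window unchanged, and the statement depends only on this width and on the relative position of $z'$ with respect to $z$.

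Next, I would identify the set of directions $\theta$ for which a line through a reference point of $D(z,\rho')$ can reach the target disc $D(z',\rho')$. Considering the cleanest case in which the line is anchored at $z$ itself, I would form the right triangle with legs $\rho'$ (perpendicular to $zz'$) and $d$ (along $zz'$); the half-angle of the resulting narrow cone of hitting directions is $\arctan(\rho'/d)$, so the full window of hitting directions has angular measure at most $2\arctan(\rho'/d)$, centered on $\theta_0 = \arg(z' - z)$. Dividing by $\gamma$, the length of the interval on which $\theta$ is uniform, gives the claimed bound $P = \frac{2}{\gamma}\arctan(\rho'/d)$.

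The one delicate point is that $\theta$ parameterizes an unoriented line, so the hitting set modulo $\pi$ is really a single interval while the set inside $[\alpha,\alpha+\gamma]$ could a priori contain two translates of it. The hypothesis $\gamma/(2\pi)\le 1$ is precisely what ensures that at most one such copy sits in the admissible window, so no double-counting is needed and the factor $2$ in front of $\arctan$ accounts for the symmetric two-sided extent of the cone around $\theta_0$. I do not anticipate a serious obstacle: the lemma reduces to a one-line exercise in planar trigonometry combined with the definition of the uniform distribution on an interval, once the reduction to the $z=0$, $z'=(d,0)$ configuration has been made.
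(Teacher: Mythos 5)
The paper offers no proof of this lemma (it is introduced with ``readily verify''), so there is no argument to compare routes with; judged on its own, your verification has genuine gaps. The central one is the trigonometry of the hitting cone: for a line through $z$, the set of directions meeting $D(z',\rho')$ is bounded by the tangent lines from $z$ to that disc, and in the corresponding right triangle $d=|z-z'|$ is the \emph{hypotenuse} while $\rho'$ is a leg, so the half-angle is $\arcsin(\rho'/d)$, not $\arctan(\rho'/d)$; equivalently, the line with direction $\theta$ meets $D(z',\rho')$ iff $d\,|\sin(\theta-\theta_0)|\le\rho'$, where $\theta_0$ is the direction of $z'-z$. Your triangle (legs $\rho'$ and $d$, right angle at $z'$) measures only the angle subtended by the perpendicular radius, which underestimates the visual half-angle of the disc. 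Since $\arcsin x>\arctan x$ on $(0,1)$, your computed hitting set is strictly smaller than the true one, so the argument does not establish the stated bound $P$; at best it shows the lemma with $\arctan$ replaced by $\arcsin$ (which is all the paper actually needs, since in (\ref{eqprob}) the bound is immediately relaxed to a multiple of $\rho'/\Delta$ under $\Delta\gg\rho'$).

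Two further points do not survive scrutiny. Anchoring the line at the center $z$ is not the worst case of the hypothesis, which only says the line crosses the disc $D(z,\rho')$: a line through a point of that disc nearer to $z'$ sees $D(z',\rho')$ under the larger half-angle $\arcsin(\rho'/(d-\rho'))$, and if only the direction is prescribed while the offset inside $D(z,\rho')$ is arbitrary, the hitting condition becomes the overlap of two parallel strips of width $2\rho'$, i.e. $d\,|\sin(\theta-\theta_0)|\le 2\rho'$, which is larger still; so the general case does not reduce to the centered one. Also, the claim that $\gamma\le 2\pi$ ensures at most one copy of the hitting interval (mod $\pi$) lies in the window is false: for $\gamma$ near $2\pi$ both antipodal copies lie in $[\alpha,\alpha+\gamma]$ and the bad measure doubles. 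A correct write-up should treat the lemma as the small-ratio estimate it is used as: prove the $\arcsin$ tangent-line bound, take the worst case over the crossing point inside $D(z,\rho')$, and account explicitly for the two antipodal direction intervals, noting that all of this is absorbed into the constants of (\ref{eqprob}) when $\rho'\ll|z-z'|$.
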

 
Next apply the lemma to 
 a pair of nodes of the grid
with distance $\Delta=|z-z'|$ between their two centers $z$ and $z'$.
In this case  $\gamma=1/8$ and the two nodes lie in the two discs $D(z,\rho')$ 
and  $D(z',\rho')$ for $\rho'=\rho \sqrt 2$.
Furthermore assume that $\Delta\gg \rho'$,  so that
$\frac{\rho'}{|z-z'|}\approx\arctan (\frac{\rho'}{|z-z'|})
\le 1.001 \frac{\rho'}{|z-z'|}$.
Then the lemma implies that
\begin{equation}\label{eqprob}
 P\le 16\sqrt 2 \arctan (\rho'/\Delta)<22.6275~\rho/\Delta.
\end{equation}

\begin{theorem}\label{thmdsc}
Let the grid of Algorithm \ref{algisol}
have $N_{\rho}$ nodes overall, $N_{\rho}\le n^2$.
Suppose that an annulus of the third family
 intersects a node whose center  has
no centers of the other nodes of the grid
within the distance $\Delta=22.63 (N_{\rho}-1)\rho/\epsilon$,
for a fixed positive tolerance $\epsilon$. Then this
annulus intersects another node of the grid
with a probability at most $\epsilon$. 
\end{theorem}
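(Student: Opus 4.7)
The plan is to condition on the annulus passing through the hypothesized isolated node and then apply Lemma~\ref{lemdsc} together with the bound~(\ref{eqprob}) to each of the remaining $N_\rho-1$ nodes, closing the argument with a union bound.

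First I would verify the geometric reduction: the annulus of the third family, restricted to $D(0,r_1^+)$, is well approximated by a straight strip of width at most $\rho$ whose direction is perpendicular to $\phi$. This is because the annulus is centered at $-\eta r_1^+\exp(\phi\sqrt{-1})$, a point lying at distance $\eta r_1^+$ from the origin, and $\eta$ has been chosen large enough for the curvature of the annulus across $D(0,r_1^+)$ to be negligible compared to $\rho$. Each node, being close to a square of side at most $\rho$, is contained in a disc of radius $\rho'=\rho\sqrt 2$ around its center, the extra factor absorbing both the half-diagonal of the square and the width of the annulus itself.

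Next, I would treat the annulus as a line whose direction is determined by $\phi$. Because $\phi$ is uniform in $[\pi/8,3\pi/8]$, the line's direction is uniform over a range whose length, relative to $2\pi$, equals $1/8$, matching the value of $\gamma$ implicit in the derivation of~(\ref{eqprob}). The hypothesis of the theorem says the annulus does intersect the specified node at center $z_0$, i.e.\ the corresponding line passes through $D(z_0,\rho')$. For any other node with center $z_j$, the hypothesis gives $|z_0-z_j|\ge\Delta$, so applying Lemma~\ref{lemdsc} and the estimate~(\ref{eqprob}) shows that the (conditional) probability the same line also enters $D(z_j,\rho')$ is at most $22.6275\,\rho/\Delta$.

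Finally, I would take a union bound over the $N_\rho-1$ other nodes to bound the probability that the annulus intersects any of them by $(N_\rho-1)\cdot 22.6275\,\rho/\Delta$. Substituting the hypothesized value $\Delta=22.63(N_\rho-1)\rho/\epsilon$ reduces this upper bound to $(22.6275/22.63)\,\epsilon<\epsilon$, as claimed. The main obstacle is the first step: rigorously reducing the annular geometry to the straight-line situation covered by Lemma~\ref{lemdsc}. One must control both the curvature of the annulus over $D(0,r_1^+)$ and its nonzero width $\rho$; both effects are absorbed into the slack factor that enlarges $\rho/\sqrt{2}$ to $\rho'=\rho\sqrt 2$, provided $\eta$ is taken sufficiently large. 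A secondary book-keeping task is checking that the linearization $\arctan(x)\le 1.001\,x$ used in~(\ref{eqprob}) remains valid in the relevant regime, which holds because $\Delta/\rho'$ is of order at least $1/\epsilon$ whenever $\epsilon$ is small.
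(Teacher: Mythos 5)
Your argument is essentially the paper's own proof: the paper simply says to apply bound~(\ref{eqprob}) pairwise (the hypothesized node against each of the other $N_\rho-1$ nodes) and implicitly takes the same union bound, yielding $(N_\rho-1)\cdot 22.6275\,\rho/\Delta<\epsilon$ for the stated $\Delta$. Your additional discussion of reducing the annulus to a line and the nodes to discs of radius $\rho\sqrt 2$ only makes explicit the approximations already built into the paper's derivation of~(\ref{eqprob}), so the proposal is correct and follows the same route.
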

\begin{proof}
Apply bound (\ref{eqprob}) to all pairs of the nodes of the grid.
\end{proof}

Now correctness of the algorithm follows because 
every root of the polynomial $p(x)$ lies in some 
 annulus of each of the three families. 

The Boolean complexity bounds for Stage 1 of Algorithm
\ref{algrrinc} are immediately extended to 
cover the Boolean complexity of Stages 1 and 2 
of Algorithm \ref{algisol} as long as 
$\log((r_1^++1)\eta /\rho=O(\log(n))$
(cf. Corollary \ref{corr}).
The Boolean complexity at Stage 3 of Algorithm
\ref{algrrinc} is dominated if we 
check separately whether the real parts and the
imaginary parts of nodes overlap  pairwise. 
Likewise the Boolean complexity at Stage 3 of Algorithm
\ref{algrrinc} is dominated if, for every annuls  
of the third family, we
apply the bisection process, which 
begins with the 
median knot of the grid and recursively
discards 
the knots that lie above and below the annulus.

\begin{remark}\label{reclst}
We can modify Stage 2 of Algorithm
\ref{algrrinc} by collapsing every chain of $m$
pairwise overlapping or coinciding root radii
intervals (where $m\le n$) into a single interval of relative width
at most $m\rho/((r_1^++1)\eta)$ and by assigning to this 
interval multiplicity $m$.
Such extended root radii define thicker 
annuli, of multiplicity $m>1$.
The intersection of two annuli of two families, 
having multiplicity $m_1$ and $m_1$, respectively,
defines a node of the new grid, to which we assign   
multiplicity $\min\{m_1,m_2\}$.  
We do not change  Stage 4, except that 
an output node of
multiplicity $m$
contains exactly $m$ 
roots of the polynomial  $p(x)$, each
counted according to its multiplicity.
\end{remark}


\begin{remark}\label{rerefin} 
Clearly the assumptions of part (i) of Theorem \ref{thrrrf}
are satisfied for the centers of the output nodes of  Algorithm
\ref{algrrinc}, and so we can extend the algorithm to
refining the approximations to the well conditioned roots
of the polynomial  $p(x)$ given by these centers.
For 
every node output by the algorithm of the previous remark,
we can compress its superscribing disc fast,
according to part (ii) of Theorem \ref{thrrrf}, 
as long the assumptions of that part
are satisfied.
\end{remark}


\begin{remark}\label{reshfts}
We have chosen absolutely large shift values 
in order to simplify our analysis, although in
this case we must use higher computational precision. 
One can apply 
 the same algorithm for smaller value of $\eta$.
This heuristic variation may work and 
may allow to use lower precision of computing. 
\end{remark}


\begin{remark}\label{reminangle}  
If the nodes were points and the annuli straight lines, 
one could compute the minimum nonzero
angle $\phi$ between all the leftmost and all the rightmost nodes of the grid
lying above.
Then by directing the third family of the annuli 
by the angle $\phi/2$, one could ensure deterministically 
that every annulus passes through exactly a single node 
 of the grid. This recipe 
can be extended to the case of sufficiently
thin annuli and small nodes. 
If the width of the annuli is too large, 
we can decrease it 
by using more root-squaring iterations, but 
it is not clear for which class of inputs  
we can still keep the claimed overall cost bound
under this recipe. 
\end{remark}



\section{The Tests for Real Root-finding with Algorithm \ref{algrrinc}}\label{stst}
 

We tested a variant of
our isolation Algorithm \ref{algrrinc} described in Remarks 
\ref{reheur}--\ref{revar}. We applied it 
  to polynomials of degrees $n=64$,$128,256,512$,$1024$ having no clustered roots.
In this variant we simplified Stage 3:
instead of the algorithm of \cite{PT13}, \cite{PT15}
we applied three Newton's iterations, initialized at the center of the 
isolation interval
output by Algorithm \ref{algrrinc}. In the case of poor convergence,
we performed a few bisections of the output interval of 
 Algorithm \ref{algrrinc} and then applied three Newton's iterations again.
We performed  all computations with  
 double precision and estimated the output 
errors by comparing  our results with the outputs 
of MATLAB function "roots()". 

We generated our input polynomials as the products of the Chebyshev polynomials of the first kind
(having degree $r$ and thus having $r$ simple real roots) with polynomials of degree $n-r$
of the following three families,

1. Polynomials with real standard Gaussian random  coefficients.

2. Polynomials with complex standard Gaussian random coefficients.

3. Polynomials with integral consecutive coefficients starting at 1, i.e., $p(x) = 1 + 2x + ... + (n-1)x^n$.

Table \ref{RealRoots} displays the number of Dandelin's root-squaring iterations 
(in the column ``Iter")
 and the maximum error (in the column ``Error"). The tests showed very slow growth 
of the number of iterations, and consequently of the computational cost of Stage 1,
 as $n$ increased from 64 to 1024.


\begin{table}[h]
\caption{Real Root-finding with Using Root Radii Estimation}
\label{RealRoots}
\begin{center}
\begin{tabular}{|*{8}{c|}}
\hline
\multicolumn{2}{|c|}{}&\multicolumn{2}{c|}{Type 1} 	& \multicolumn{2}{c|}{Type 2} 	& \multicolumn{2}{c|}{Type 3} \\ \hline
\bf {n}	&	\bf {r}	&	Iter	&	Error	&	Iter	&	Error	&	Iter	&	Error	\\ \hline
64	&	4	&	4	&	6.42E-11	&	4	&	6.42E-11	&	4	&	6.42E-11	\\ \hline
64	&	8	&	6	&	3.16E-05	&	6	&	3.16E-05	&	6	&	3.16E-05	\\ \hline
64	&	12	&	7	&	4.36E-03	&	7	&	4.36E-03	&	7	&	4.36E-03	\\ \hline
128	&	4	&	4	&	6.42E-11	&	4	&	6.42E-11	&	4	&	6.42E-11	\\ \hline
128	&	8	&	6	&	3.16E-05	&	6	&	3.16E-05	&	6	&	3.16E-05	\\ \hline
128	&	12	&	7	&	4.36E-03	&	7	&	4.36E-03	&	7	&	4.36E-03	\\ \hline
256	&	4	&	4	&	6.42E-11	&	4	&	6.42E-11	&	4	&	6.42E-11	\\ \hline
256	&	8	&	6	&	3.16E-05	&	6	&	3.16E-05	&	6	&	3.16E-05	\\ \hline
256	&	12	&	7	&	4.36E-03	&	7	&	4.36E-03	&	7	&	4.36E-03	\\ \hline
512	&	4	&	4	&	6.42E-11	&	4	&	6.42E-11	&	4	&	6.42E-11	\\ \hline
512	&	8	&	6	&	3.16E-05	&	6	&	3.16E-05	&	6	&	3.16E-05	\\ \hline
512	&	12	&	7	&	4.36E-03	&	7	&	4.36E-03	&	7	&	4.36E-03	\\ \hline
1024	&	4	&	4	&	6.42E-11	&	4	&	6.42E-11	&	4	&	6.42E-11	\\ \hline
1024	&	8	&	6	&	3.16E-05	&	6	&	3.16E-05	&	6	&	3.16E-05	\\ \hline
1024	&	12	&	7	&	4.36E-03	&	7	&	4.36E-03	&	7	&	4.36E-03	\\ \hline

\end{tabular}
\end{center}
\end{table}


\section{Conclusions}\label{sconc}

In our future study we plan to elaborate upon the extension
of  our isolation 
of multiple roots and root clusters, by  
 incorporating the techniques that extend part (ii) of 
Theorem \ref{thrrrf}
and complementing them by the algorithm of \cite{K98}. 
These techniques 
should enable us to  
 split out the factor of $p(x)$ whose root set is precisely the cluster
of the roots of the polynomial $p(x)$ lying in the output node,
and then we can work on root-finding for the remaining complementary factor.
Developing this approach, we
are going to explore potential advantages of shifting the
origin into the center of gravity of the roots, $-p_{n-1}/(np_n)$ (cf. \cite{S82}),
or into the roots of higher order derivatives of $p(x)$ (cf. \cite[Section 15]{MP13}).

We plan to test our current and extended
algorithms against the available real root-finders
and to explore the chances for enhancing the efficiency of our algorithms
by means of their combination
with
some of the other  highly efficient techniques 
 known for root-finding.

\medskip

{\bf Acknowledgements:}
This work has been supported by NSF Grant CCF 1116736 
and  PSC CUNY Award 67699-00 45. 


\end{document}